\documentclass[12pt,reqno]{amsart}
\usepackage{amsmath, amsthm, amssymb}
\usepackage[T1]{fontenc}
\usepackage[OT2, T1]{fontenc}
\usepackage[russian,english]{babel}
\usepackage{geometry}
\usepackage{cancel}
\usepackage{soul}
\usepackage{comment}
\usepackage{color}
\usepackage{amscd}
\usepackage{tabularx}
\usepackage{url}
\usepackage{eurosym}
\usepackage{braket}
\usepackage{hyperref}

\usepackage{amsrefs}

\usepackage{setspace}
\usepackage{verbatim}
\usepackage{mathrsfs}
\usepackage{dsfont}
\usepackage{amsbsy}
\usepackage{amstext}
\usepackage{amsthm}
\usepackage{amssymb}
\usepackage{xcolor}
\usepackage{cancel}
\usepackage{graphicx}
\usepackage[export]{adjustbox}

\usepackage{float}
\PassOptionsToPackage{normalem}{ulem}
\usepackage[toc,page]{appendix}
\usepackage{graphicx,tikz}

\usepackage{adjustbox,graphicx,subcaption}
\makeatletter
\usepackage{eurosym}
\usepackage{mathrsfs}
\usepackage{dsfont}
\usepackage{xcolor}
\usepackage{cancel}

\newtheorem{theorem}{Theorem}[section]
\newtheorem{lemma}[theorem]{Lemma}

\theoremstyle{definition}

\theoremstyle{remark}
\newtheorem{remark}[theorem]{Remark}

\numberwithin{equation}{section}

\def \bR {\mathbb R}
\def \bbR {\mathbb R}

\def \bZ {\mathbb Z}

\def \cC {\mathcal C}

\def \cM {\mathcal M}

\def\N{{\mathbb N}}
\def\R{{\mathbb R}}
\def\Z{{\mathbb Z}}

\def \ds1 {\mathds{1}}

\def\eps{\varepsilon}

 \def\ge{\geqslant}

\newcounter{@ToDo}
\newcommand{\todo@helper}[1]{%
	({\color{blue}TODO~\arabic{@ToDo}: {#1\@addpunct{.}}})%
}
\newcommand{\todo}[1]{\stepcounter{@ToDo}%
	\relax\ifmmode\text{\todo@helper{#1}}%
	\else\todo@helper{#1}\fi%
}
\newcounter{@cdo}
\newcommand{\cdo@helper}[1]{%
	({\color{red}CITE~\arabic{@cdo}: {#1\@addpunct{.}}})%
}
\newcommand{\cdo}[1]{\stepcounter{@cdo}%
	\relax\ifmmode\text{\cdo@helper{#1}}%
	\else\cdo@helper{#1}\fi%
}

\author{Mingfeng Chen}
\author{Rajula Srivastava}
\author{Niclas Technau}
\subjclass[2020]{11J83; 11J13; 11J25; 42B20}

\address{Mingfeng Chen;
Institut des Hautes Études Scientifiques, 91440 Bures-sur-Yvette, France}

\email{chenmf@ihes.fr}

\address{Rajula Srivastava; 
Department of Mathematics, 
University of Wisconsin 480 Lincoln Drive, Madison, WI, 53706, USA}
\email{rsrivastava9@wisc.edu}

\address{Niclas Technau; 
Department of Mathematics, 
University of Wisconsin 480 Lincoln Drive, Madison, WI, 53706, USA}
\email{technau@wisc.edu}

\begin{document}
\title[Rational Points Near Nondegenerate Curves]
{Counterexamples for Rational Points Near Curves}
\date{\today}

\begin{abstract}
Let $\delta \in [0,1/2]$ and $Q\geq 1$.
Given a compact $C^{\infty}$-curve $\cC$ in $\mathbb{R}^n$, denote by $N_{\cC}(\delta, Q)$ 
the number of pairs $(\mathbf{a}, q)\in \mathbb{Z}^n\times [Q/2,Q]$ such that the rational point 
$\mathbf{a}/q$ is $\delta/q$-close to $\cC$.
We investigate the range of $\delta$ in terms of $Q$ which is necessary for the heuristic 
$N_{\cC}(\delta, Q)\asymp\delta^{n-1}Q^2$ to be correct for the moment curve.
For a nondegenerate curve $\cC\subset \R^n$ and for sufficiently large $Q$, Hickman and the second author previously established that $N_{\cC}(\delta, Q)\lesssim_{\nu} \delta^{n-1}Q^2$ 
for all $\delta\in[Q^{\alpha(n+1)+\nu},1/2)$ and any $\nu>0$.
Here $\alpha(n)=-4n^{-2} +O(n^{-3})$ was explicitly computed.
We show that this range is surprisingly close 
to being sharp in the asymptotic sense as $n\to \infty$,
and can at most be extended to $\delta\in[Q^{\alpha(n)},1/2)$.
In particular, the sharp exponent can only be about $O(n^{-3})$ better, 
even though it was widely believed before that an improvement of $O(n^{-1})$ should be possible. The complementary upper bound in this remaining range has been recently established by Gan--Guo--Oh. 
\end{abstract}
\maketitle
\section{Introduction}
Let $I=[-1, 1]$. 
Consider the rational points near 
the moment curve 
\begin{equation}\label{def moment curve}
\Gamma=\{(t, t^2, \ldots, t^n): t\in I\}
\end{equation}
in $\R^n$, that is
    \begin{align}\label{def counting function}
        N_{\Gamma}(\delta, Q):=\# \Big\{(a,q)\in \Z\times \N: \frac{a}{q}\in I, \ \frac{Q}{2}\leq q\leq Q, \  
        \max_{i\leq n}\Big\|q\Big(\frac{a}{q}\Big)^i\Big\|_{\bR/\bZ}\leq \delta 
        \Big\}.
    \end{align}
Heuristically, we expect $ {N}_{\Gamma}(\delta,Q)$ should be comparable to $ \delta^{n -1}Q^{2}$ whenever $\delta>Q^{-c}$ for some $c>0$. Until very recently, it was widely believed that this heuristic should be true for 
$$
\delta>Q^{-\frac{1}{n-1}}.
$$ However, in the recent work \cite{chen2026sharp}, the authors and A. Seeger showed that the conjectured upper bound  ${N}_{\Gamma}(\delta,Q)\lesssim_{\eps} \delta^{n-1} Q^{2}$  fails  if 
\begin{equation}\label{eq CSST range}
Q^{\frac{\eps-1}{n-1}}\ll \delta < Q^{-\frac{1}{2n-3}}    
\end{equation}
and $Q$ is sufficiently large.\footnote{Here and throughout the paper we use  the following notation: Given a list of objects/parameters $L$ and non-negative real numbers $A$, $B$, we write $A \lesssim_L B$ or $B \gtrsim_L A$ or $A = O_L(B)$ if $A \leq C_L B$ where $C_L > 0$ is a constant depending only on $L$ and underlying manifold $\cM$. We write $A \approx_L B$ or $A\asymp_L B$ when both $A \lesssim_L B$ and $B \lesssim_L A$ hold.}  Further, for $n=3$, this paper established the expected upper bound in the range $\delta > Q^{\varepsilon-\frac{1}{3}}$, for $\varepsilon>0$, which is optimal up to endpoints. In this note, we build on the example introduced in \cite{chen2026sharp} to sharpen the range of $\delta$ over which the heuristic upper bound can hold for $n\geq 4$. In particular, the examples detailed below yield new obstructions in dimensions $n\geq 5$ which further restrict the range of $\delta$ in which the expected upper bound may hold.

\begin{theorem}
\label{thm main}
Let $n\geq 3$. For $Q$ sufficiently large, $ N_{\Gamma}(\delta, Q)\lesssim \delta^{n-1}Q^2$
implies that $\delta \gtrsim Q^{\alpha(n)}$, where
\begin{equation}\label{eq def alpha}
 \alpha(n):=\begin{cases}
    -\frac{4}{n^2+2n-3}, &\textrm{for odd } n,\\
    -\frac{4}{n^2+2n-4}, &\textrm{for even } n.
\end{cases}
\end{equation}

\end{theorem}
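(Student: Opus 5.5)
The plan is to prove the contrapositive by construction: for $\delta$ slightly below $Q^{\alpha(n)}$ I will exhibit more than $C\delta^{n-1}Q^2$ pairs $(a,q)$ counted by $N_\Gamma(\delta,Q)$, for any fixed $C$. I build on the example of \cite{chen2026sharp}, which clusters the rationals $a/q$ around rationals $b/r$ of small denominator $r\le R$, where $R$ is a free parameter. Concretely, I take $q$ in a suitable arithmetic progression modulo a power of $r$ and write $a/q=b/r+h$. Expanding by the binomial theorem,
\begin{equation*}
q\Big(\frac{a}{q}\Big)^i=\sum_{j=0}^{i}\binom{i}{j}\,q\Big(\frac{b}{r}\Big)^{i-j}h^j .
\end{equation*}
Each term must be made either an integer or of size $O(\delta)$, uniformly in $i\le n$. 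The $j=0$ terms are integers once a suitable power of $r$ divides $q$. The low-order terms $j\ge1$ are integers once $qh=c$ is divisible by an appropriate power of $r$. The high-order terms $q h^j=c^j/q^{j-1}$ are $O(\delta)$ once $|c|$ is small.

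As a sanity check, take $r^n\mid q$ and $c=0$. This already gives $\gtrsim Q\sum_{r\le R} r^{1-n}\gtrsim Q$ points, which beats $\delta^{n-1}Q^2$ exactly when $\delta<Q^{-1/(n-1)}$. This recovers the trivial obstruction.

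To reach $\alpha(n)$ I will refine the divisibility conditions. Rather than $r^n\mid q$, I impose $r^{k}\mid q$ with $k$ about $n/2$. I then let the residual fractional parts of $q(b/r)^i$ for $i>k$ be cancelled by the linear and quadratic terms in $h$. This amounts to solving congruences modulo $r^{i-1}$ for $c$, and it uses the freedom in $c$ rather than forcing $r^{n-1}\mid c$. The smallness constraint $|c|^{j}/q^{j-1}\lesssim\delta$ is imposed only for $j$ beyond the cancelled range.

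The first counting step is to estimate, for each $r\le R$, the number $\approx \phi(r)\cdot Q r^{-k}\cdot \#\{c\}$ of admissible triples $(b,q,c)$. Here $\#\{c\}$ is roughly (the length of the allowed $c$-interval) divided by $r^{m}$, where $m$ is the number of extra congruence conditions, and this count requires the interval length to exceed $r^m$. Summing over $r\le R$ gives a lower bound of the form $Q\,R^{\beta}\,(\delta Q)^{\gamma}$, with $\beta,\gamma$ depending on $(k,m,n)$. I also check that distinct triples give distinct pairs $(a,q)$, or at least that each pair arises only boundedly often, and that $a/q\in I$.

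The second step is to compare this lower bound with $\delta^{n-1}Q^2$. I will choose $R$ as large as the constraint on the $c$-interval allows, and choose $k,m$ to maximise the resulting range of $\delta$. The optimum should split the indices $1,\dots,n$ roughly in half between "integer by divisibility" and "small by size". When $n$ is odd the halves balance exactly, which should give the denominator $n^2+2n-3=(n-1)(n+3)$. When $n$ is even there is a half-integer defect, which should give $n^2+2n-4$. Matching these two denominators confirms the correct choice of $(k,m)$ in each parity.

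I expect the main obstacle to be the exact arithmetic of the intermediate terms. The cross terms $\binom{i}{j}q(b/r)^{i-j}h^j$ with $j\ge 2$ have denominators mixing powers of $r$ and of $q$. I need to show that a single choice of $c$ modulo $r^{m}$ makes all of them simultaneously integral, or $O(\delta)$, for every $i\le n$. The binomial coefficients and the possible non-invertibility of $b$ modulo $r$ complicate this, and I plan to restrict to prime $r$ with $\gcd(b,r)=1$ to handle it. My first attempt at this step would be to take $r$ prime and write the conditions as a triangular system of congruences in $c$. I would then verify by induction on $i$ that each new condition is implied by the previous ones together with the divisibility of $q$.
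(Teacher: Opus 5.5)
\textbf{There is a genuine gap: the cancellation mechanism fails.} For $j\ge 2$ the term $\binom{i}{j}q(b/r)^{i-j}h^j=\binom{i}{j}b^{i-j}c^j/(r^{i-j}q^{j-1})$ has $q^{j-1}$ in its denominator. No divisibility of $c$ by powers of $r$ makes it integral, and it cannot cancel a fractional part with denominator $r^i$ through a congruence on $c$. Already the $i=2$ coordinate $qb^2/r^2+2bc/r+c^2/q$ forces $|c|\lesssim(\delta Q)^{1/2}$ in your framework, where each term is integral or $O(\delta)$.

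What remains is cancellation by the linear terms, and that is rigid. Take $p>n$ prime with $p\nmid b$, and write $\gamma(t)=(t,t^2,\dots,t^n)$. Integrality of $q\gamma(b/p)+c\gamma'(b/p)$ means $p\mid q$ and $p^{i-1}\mid (q/p)b+ic$ for $2\le i\le n$. Subtracting consecutive conditions gives $p^{n-2}\mid c$, hence $p^{n-1}\mid q$. So imposing only $r^k\mid q$ with $k\approx n/2$ is impossible. The admissible $(q,c)$ form a lattice of covolume $p^{2n-2}$, and summing over $b$ and $p\le R$ gives only $O(Q(\delta Q)^{1/2}+Q^{1+o(1)})$, nothing beyond the $r=1$ term. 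That is the example of \cite{chen2026sharp}, which yields $\delta\gtrsim Q^{-1/(2n-3)}$. This coincides with $Q^{\alpha(n)}$ for $n=3,4$ but is strictly weaker for $n\ge 5$: for $n=5$ the exponent is $-1/7$ versus $\alpha(5)=-1/8$. Your last step, choosing $(k,m)$ so the denominators come out as $n^2+2n-3$ and $n^2+2n-4$, is fitted to the answer rather than derived.

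\textbf{The missing idea is near-integrality.} The low-order coordinates should be \emph{near} integers, at a cost of a factor $\delta$ each, not exactly integral. The paper keeps the centre $0$ but enlarges the window to $a/q\asymp T=(\delta/Q)^{1/(m+1)}$ with $2\le m\le n-1$. Coordinates $i>m$ are then $\lesssim QT^{i}\le\delta$ automatically. For $2\le i\le m$ the quantities $q(a/q)^i\asymp QT^i$ are large and need only be $\delta$-close to integers $a_i$.

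This is made rigorous by counting integer tuples $(a_0,\dots,a_m)$ with $a_j\in[QT^j/2,QT^j]$ and $|a_i-a_{i+1}^2/a_{i+2}|\lesssim\delta/T^2$. One chooses $a_m,a_{m-1}$ freely, suitably centred, and then $a_{m-2},\dots,a_0$ successively in intervals of length $\asymp\delta/T^2\gg1$. This gives $\gtrsim Q^2T^{2m-1}(\delta/T^2)^{m-1}=\delta^{m-1}Q^2T$ tuples. From $u_j=a_{j+1}^2-a_ja_{j+2}=O(\delta a_j)$ and the binomial theorem one gets $a_{j+1}^{i-j}/a_j^{i-j-1}=a_{j+2}^{i-j-1}/a_{j+1}^{i-j-2}+O(\delta)$. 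This telescopes to $a_0(a_1/a_0)^i=a_i+O(\delta)$.

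Comparing $\delta^{m-1}Q^2T$ with $\delta^{n-1}Q^2$ gives $\delta\gtrsim Q^{-1/((n-m)(m+1)-1)}$. Maximizing $(n-m)(m+1)$ at $m\approx(n-1)/2$ produces $\alpha(n)$. Your instinct that the indices split roughly in half is right. However, the lower half is handled by near-integrality with a density loss, not by divisibility.
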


Before \cite{chen2026sharp}, complete results were known in the case of  planar curves and hypersurfaces with non-vanishing curvature or under finite-type conditions (see \cite{huxley1994rational, vaughan2006diophantine, huang19, chen2025rational, huangduke, st2023}), or for certain low co-dimension surfaces  under a nonvanishing Gaussian curvature condition with respect to all normals \cite{schindler2022density, sri24, hsw25}. For lower bounds see \cite{BeresnevichDickinsonVelani07}. 

Alternatively, one can also study the more difficult problem of counting rational points with a fixed denominator $q$ near $\Gamma$, that is, 
obtaining bounds for the counting function
\begin{align}
        L_{\Gamma}(\delta, q):=\# \Big\{(a,q)\in \Z\times \N: \frac{a}{q}\in I,\  
        \max_{i\leq n}\Big\|q\Big(\frac{a}{q}\Big)^i\Big\|_{\bR/\bZ}\leq \delta 
        \Big\}.
    \end{align}
For a nondegenerate curve $\cC$ in $\bbR^3$, of which the moment curve is a prototypical example, it was shown in \cite{huang19} that
the heuristic upper bound $L_{\cC}(\delta, q)\lesssim \delta^2q$  is true when $\delta \gtrsim q^{-\frac15}(\log q)^{\frac25}$ and $q\gtrsim 1$.
In \cite{hickman2025counting}, J. Hickman and the second author generalized this result to higher dimensions. More specifically, \cite{hickman2025counting} established that if $\nu>0$ then 
\begin{equation*}
    L_{\cC}(\delta,q)\lesssim_{\nu} \,\delta^{n-1} q \textrm{ for } q\gtrsim_{\nu} 1 \textrm{ and } \delta \geq q^{\beta_{\textrm{HS}}(n)+\nu},
\end{equation*}
where
\begin{equation}
    \label{eq delta range}
    \beta_{\textrm{HS}}(n):=\alpha(n+1)=\begin{cases}
    -\frac{4}{n(n+4)}, & \textrm{ for even } n,\\
    -\frac{4}{n(n+4)-1}, & \textrm{ for odd } n.    
    \end{cases}
\end{equation}
Since $N_{\cC}(\delta, Q)=\sum_{q=\frac{Q}{2}}^{Q}L(\delta, q)$, we also obtain that
\begin{equation}
    \label{eq hs ub}
    N_{\cC}(\delta,Q)\lesssim_{\nu} \,\delta^{n-1} Q^2 \textrm{ for } Q\gg_{\nu} 1 \textrm{ and } \delta \geq Q^{\beta_{\textrm{HS}}(n)+\nu}.
\end{equation}

\begin{remark}
As a joint consequence of \eqref{eq hs ub} and Theorem \ref{thm main}, at the time when this manuscript was being written, the only range where the expected upper bound for $N_{\cC}(\delta, Q)$ still needed to be established was 
\begin{equation}
    \label{eq rem delta range}
   Q^{\alpha(n)} \leq \delta \leq Q^{\beta_{\textrm{HS}}(n)}\textrm{ for } n\geq 4.
\end{equation}
A short calculation
involving geometric series reveals that
$$\beta_{\textrm{HS}}(n)=\alpha(n+1)=-\frac{4}{n^2}+\frac{16}{n^3}+O(n^{-4}), \qquad \alpha(n)=-\frac{4}{n^2}+\frac{8}{n^3}+O(n^{-4})\,.$$
Thus, the gap between exponents in $\mathbb{R}^n$ is $O(n^{-3})$ . In particular, the argument in \cite{hickman2025counting} already captures the leading constant \(4\) in the \(n^{-2}\) scale for the exponent.

This was a pleasant surprise to us as the positive result in \cite{hickman2025counting} in all dimensions $n\geq 3$ was obtained by fairly classical and relatively simple Fourier analytic arguments. It required no arithmetic input other than a simple volume based counting in $O(1)$ neighborhoods of nested surfaces in the dual frequency space. In contrast, in the recent work \cite{chen2026sharp} for space curves in $\mathbb{R}^3$, A. Seeger and the current authors required a substantially elaborate Fourier analytic argument and the planar counting result of Vaughan--Velani from \cite{vaughan2006diophantine} to exploit additional cancellation in the denominator variable $q$ and obtain a sharp result.
In this sense, the rational point counting problem for nondegenerate curves appears to get more tractable as the dimension $n$ increases, due to additional restrictions on the range of $\delta$.
\end{remark}

\begin{remark}
While working on the manuscript, the authors were made aware of the recent work \cite{GanGooOh} of Gan--Guo--Oh, where they have established the expected upper bound on $N_{\cC}(\delta, Q)$ in the aforementioned remaining range of $\delta$, thereby extending the result from \cite{chen2026sharp} for space curves in $\mathbb{R}^3$ to nondegenerate curves in dimensions four and higher.
\end{remark}

\subsection*{\it Acknowledgment}
R.S. would like to thank Jonathan Hickman for many helpful conversations while working on \cite{hickman2025counting}, and all authors thank Andreas Seeger for his encouragement. This research was supported in part by NSF grant 2554830 (R.S.) and the University of Wisconsin–Madison, Office of 
the Vice Chancellor for Research with funding from the Wisconsin Alumni Research Foundation (R.S. and N.T.). 
Further,
N.T. was supported by a grant from the Simons Foundation [SFI-MPS-TSM-00025353].

\subsection*{\it Use of AI tools} As explained in detail below, the authors already had a heuristic and justification for the  necessary condition from Theorem \ref{thm main}. Our first strategy was to use existing arguments from \cite{Bers12, ChowSrivastavaTechnauYu2026} and build on the counter-example introduced in \cite{chen2026sharp} (see \S \ref{subsec heur}--\ref{subsec existing lit}). However, checking in detail
and recalling the set-up of \cite{ChowSrivastavaTechnauYu2026}
is more involved than 
necessary. A more elementary and self-contained argument (in particular, the recursive identity \eqref{eq master rec} adapted specifically to the moment curve $\Gamma$) was later suggested to us by Chat G.P.T. 6 Astra. This is described in \S\ref{subsec rigorous}--\ref{subsec lower}. The entire argument, including the proof of \eqref{eq master rec},  is written and verified independently by the authors, who take full responsibility for the content.

\section{A Family of Counterexamples}
\subsection{A Heuristic}
\label{subsec heur}
Recall $I=[-1, 1]$.
    Consider the rational points near 
    the moment curve $\Gamma$
    from \eqref{def moment curve}, 
    and recall the counting function $N_{\Gamma}$
    from \eqref{def counting function}.
For $1\leq m\leq n-2$ (the extreme $n-1$ will not matter as we will see later),
set 
$$
T_{m}=\left(\frac{\delta}{Q}\right)^{\frac{1}{m+1}}, 
\qquad I_m:= 
\Big[-\frac{T_m}{100}, \frac{T_m}{100} \Big]\,.
$$
For $a/q\in I_m$ and $i\geq m+1$, we have
\begin{align}
    |q(\frac{a}{q})^i|\leq Q T_m^i\leq 
    Q\frac{\delta}{Q}=\delta\,.
\end{align}
Thus,
\begin{align}
    N_{\Gamma}(\delta, Q) \ge \# \{(a,q)\in \Z\times \N: {a}/q\in I_m, \ Q/2\leq q\leq Q, \  \left\|q({a}/{q})^i\right\|_{\bR/\bZ}\leq \delta, \ i=2,\dots,m   \}.
\end{align}
This is the number of rational points in a $\delta/q$ neighborhood of $\Gamma_m:=(t, t^2, \ldots, t^m)$ over the interval $I_m$. \textit{Heuristically}, the above should be bounded from below by 
\begin{align}
    |I_m|\delta^{m-1}Q^2 =  \delta^{m-1+\frac{1}{m+1}}Q^{2-\frac{1}{m+1}}.
\end{align}
Compare this with 
the Example 1.3 of \cite{Bers12}
which draws the lower bound conclusion
$\delta \geq Q^{-1/c}$ for manifolds 
with co-dimension $1\leq c< n-1$
by essentially only looking 
at certain manifolds with many points 
on them.
The examples here produce a sequence of nested manifolds 
with many points \emph{near} them.

\subsection{Deducing the heuristic from existing work}
\label{subsec existing lit}
Rational points near most small intervals get their fair share 
of rational points from a non-degenerate curve. 
This is implicit in the work \cite{Bers12} of Beresnevich,
as well as the recent work \cite{ChowSrivastavaTechnauYu2026}.
Specifically, one can use 
Lemma 4.12 and Lemma 4.15
of \cite{ChowSrivastavaTechnauYu2026} 
with the following specializations. 
For a small $\eta>0$, let 
\begin{gather*}
Q^{-1/(2m+1)} \leq \delta \leq \frac12,
\quad
\delta_*:=\eta\delta, \quad 
T:=\left(\frac{\delta}{Q}\right)^{1/(m+1)},
\quad
\delta_1=\cdots=\delta_m=\delta_*,
\\
\Delta:=Q^{-1+4\eta},
\qquad
K:=Q^{-1/2+2\eta}\delta_*^{-1/2},
\qquad
r:=\frac14\left(\frac{\delta_*}{Q}\right)^{1/2}.
\end{gather*}
Furthermore, we need to formally restrict
the curve to the smaller interval 
$J=[T/200,T/100]$, in place of $I=[-1,1]$, to ensure
none of the coordinate functions vanishes on the interval,
which the set-up of \cite{ChowSrivastavaTechnauYu2026} 
requires at some points.
With these choices,
we can deduce the required bound from a routine smoothing/unsmoothing procedure 
and the cited lemmas.
(More precisely, we can deduce that the interval $J$ can be covered by a finite 
collection of $O(1/r)$ many small intervals of length $r$, such that the piece of the curve parametrized over most of them will get the
expected share of rational points near them. 
Hence, many 
different choices of such intervals $I_m$.)

In the next subsection we present 
an elementary, self-contained argument adapted specifically to the moment curve $\Gamma$.

\subsection{Making the Counter-Example Rigorous}
\label{subsec rigorous}
Instead of trying to use lower bounds from \cite{Bers12} to justify counting over the curve parametrized over a small interval $I_m$ of length 
$
T_m\asymp(\delta/Q)^{\frac{1}{m+1}},
$ we shall use some direct algebra to count rational points near $\Gamma_n=(t, t^2, \ldots, t^n)$, by recursively choosing rational points of the form $(q, a_1, a_2, \ldots, a_m, 0,\ldots, 0)$ such that:
\begin{itemize}
    \item The points $a_i/q$ lie in a \emph{non-isotropic} neighborhood, that is, with $q\asymp Q$ and $|a_i|\asymp QT^i$ for $1\leq i\leq m$.
    \item We start by choosing $a_{m-1}$ and $a_m$ first, and then \emph{recursively} select $O(\delta/T_m^2)$ many choices of $a_{m-2}, \ldots, a_1$ and $q$ such that 
    $$\left|\left(\frac{a_1}{q}\right)^i-\frac{a_i}{q}\right|\lesssim \frac{\delta}{q}\qquad \textrm{ for } 2\leq i\leq m\,.$$
\end{itemize}

\subsection{A Lower Bound}
\label{subsec lower}
As mentioned in the introduction, 
$N_{\Gamma}(\delta, Q) \lesssim \delta^{n-1} Q^2$ 
is known to be wrong by \cite{chen2026sharp}
in the range from \eqref{eq CSST range}.
Therefore, we now examine the range 
$$
\delta\in D_Q:=(c Q^{-1/(2n-3)},1)
$$
for a fixed $c>0$ on which the following 
implied constants may depend without further mention.
We shall show that  
if $2\leq m\leq n-1$ and $\delta \in D_Q$, then
\begin{equation}\label{eq lower bound count}
N_{\Gamma}(\delta, Q)\gtrsim 
\delta^{m-1+\frac{1}{m+1}}Q^{2-\frac{1}{m+1}}.
\end{equation}
First we demonstrate that 
\eqref{eq lower bound count} follows once we know the following 
lemma. 
\begin{lemma}\label{le lower bound}
Let $Q>1$ be a large. 
Given $\delta\in D_Q$, and $2 \leq m\leq n-1$, define 
$$
T=\left(\frac{\delta}{Q}\right)^{\frac{1}{m+1}}.
$$
Then, there exists a constant $K=K(\Gamma)\in (0,1)$ so that
$N_{\Gamma}(\delta, Q)\gtrsim \# A_m(K\delta,Q)$
where 
$$
A_m(\delta,Q):=\left\{(a_0, a_1, \ldots, a_m)\in \Z^{m+1}:
\begin{matrix}
  a_j\in [QT^j/2,QT^j] \,\, \mathrm{for\,}0\leq j\leq m, \\
  \left|a_i-\frac{a_{i+1}^2}{a_{i+2}}\right|\lesssim 
  \frac{\delta}{T^2}\,\, \mathrm{for\,}0\leq i\leq m-2
\end{matrix}
\right\}.
$$
\end{lemma}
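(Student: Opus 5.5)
The plan is to map each tuple $(a_0,\dots,a_m)\in A_m(K\delta,Q)$ to the pair $(a,q):=(a_1,a_0)$ and check that $a_1/a_0$ is counted by $N_\Gamma(\delta,Q)$. The map is injective, so $N_\Gamma(\delta,Q)\ge \#A_m(K\delta,Q)$.

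First the trivial conditions. We have $q=a_0\in[Q/2,Q]$. Since $a_1\asymp QT$, the point $t:=a_1/a_0$ satisfies $t\asymp T$, and $T\le 1$ for large $Q$, so $t\in I$. Write $\varepsilon_i:=a_i-a_{i+1}^2/a_{i+2}$. Membership in $A_m(K\delta,Q)$ gives $|\varepsilon_i|\le C K\delta/T^2$ for $0\le i\le m-2$.

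Next, for $i\ge m+1$ we have $|q t^i|\lesssim Q T^{i}\le QT^{m+1}=\delta$. With an appropriate absolute constant this gives $\|qt^i\|\le\delta$. If needed, we shrink the dyadic windows slightly or accept a constant loss; the implicit constants in the definition allow $\delta$ to be replaced by a constant multiple of $\delta$.

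The main step is to show that $|q t^i-a_i|\lesssim K\delta$ for $2\le i\le m$. This gives $\|qt^i\|\le\delta$ once $K$ is small. The idea is that the tuple is approximately a geometric progression. Define the ratios $r_i:=a_{i+1}/a_i\asymp T$. The relation $a_i a_{i+2}=a_{i+1}^2-\varepsilon_i a_{i+2}$ gives
\[
\frac{r_{i+1}}{r_i}=\frac{a_{i+2}a_i}{a_{i+1}^2}=1-\frac{\varepsilon_i a_{i+2}}{a_{i+1}^2}=1+O\Big(\frac{K\delta}{T^2}\cdot\frac{QT^{i+2}}{Q^2T^{2i+2}}\Big)=1+O\Big(\frac{K\delta}{QT^{i}}\Big).
\]
Since $t=r_0$, we get $a_i=a_0\prod_{j<i} r_j$. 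Hence $a_i/(q t^i)=\prod_{j<i}(r_j/r_0)$. Telescoping the ratio bounds, each $r_j/r_0=1+O(K\delta/(QT^{j-1}))$. Both the errors and $r_j/r_0-1$ are small because $\delta/(QT^{m})=T\le1$. Therefore
\[
|a_i-qt^i|\lesssim a_i\sum_{j<i}\frac{K\delta}{QT^{j-1}}\lesssim QT^i\cdot\frac{K\delta}{QT^{i-2}}=K\delta T^{2}\le K\delta.
\]
The sum is dominated by its largest term $j=i-1$, since it is a geometric sum in $T^{-1}$ and $T\le 1/2$ for large $Q$.

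This telescoping estimate is the obstacle. One must verify that the errors are relative errors of size at most $O(K)$, so that the product expansions are legitimate and do not blow up. The worst index is $i=m$, where the relative error is $K\delta/(QT^{m-1})=KT^2\le K$. Everything is therefore uniformly small, and choosing $K$ small (depending on $m\le n-1$ and the implicit constants) closes the argument.

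The hypothesis $\delta\in D_Q$ only ensures that $T$ is small and that the parameters are consistent; it is not otherwise needed here. It will matter later, when $\#A_m$ is counted from below.
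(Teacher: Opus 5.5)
Your argument is essentially the paper's, written multiplicatively. Dividing \eqref{eq master rec} by its right-hand side (which has size $\asymp QT^i$) turns it into $(r_j/r_{j+1})^{i-j-1}=1+O(\delta/(QT^i))$. Chaining these over $j=0,\dots,i-2$ is exactly your identity $\prod_{j<i}r_j/r_0=\prod_{l=0}^{i-2}(r_{l+1}/r_l)^{i-1-l}$.

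The route works, but the key computation has an exponent slip. In
\[
\frac{K\delta}{T^2}\cdot\frac{QT^{i+2}}{Q^2T^{2i+2}}
\]
you dropped the factor $T^{-2}$: this equals $K\delta/(QT^{i+2})$, not $K\delta/(QT^{i})$. (Also $a_ia_{i+2}=a_{i+1}^2+\varepsilon_ia_{i+2}$, with a plus sign; that one is harmless.) Propagating the correct bound gives $r_j/r_0=1+O(K\delta/(QT^{j+1}))$, then $a_i/(qt^i)=1+O(K\delta/(QT^{i}))$, hence $|a_i-qt^i|\lesssim K\delta$, not $K\delta T^2$. Your stronger bound is actually false: for $i=2$ one has exactly $qt^2-a_2=-\varepsilon_0a_2/a_0$, which has size $K\delta$ whenever $|\varepsilon_0|\asymp K\delta/T^2$. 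Fortunately $K\delta$ is precisely what is needed. The worst relative error is $K\delta/(QT^m)=KT$ rather than $KT^2$, which is still small, so the product expansions remain legitimate and the conclusion survives once the exponents are corrected.

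Two smaller points.

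First, for $m<i\le n$ the constant genuinely matters if $T=(\delta/Q)^{1/(m+1)}$ is held fixed, since $qt^{m+1}=a_1^{m+1}/a_0^{m}$ can be as large as $2^{m}\delta$. Your hedge about replacing $\delta$ by a constant multiple is the right fix, but it should be made precise. Let $K$ enter $T$ too, i.e.\ read $A_m(K\delta,Q)$ with $T=(K\delta/Q)^{1/(m+1)}$; this is what the paper's ``replacing $\delta$ with $\delta K$'' amounts to. It gives $qt^i\le 2^{m}K\delta\le\delta$ for $K\le 2^{-m}$.

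Second, injectivity of $(a_0,\dots,a_m)\mapsto(a_1,a_0)$ is asserted without reason. Given $a_i,a_{i+1}$, any two admissible values of $a_{i+2}$ differ by at most $8CK\delta$, where $C$ is the implicit constant in the definition of $A_m$. So the map is injective once $8CK\le 1$, and it is $O(1)$-to-one in any case, which is all that $\gtrsim$ requires.
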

Once the lemma is at hand, the deduction of \eqref{eq lower bound count} is almost immediate:
\begin{proof}[Proof of \eqref{eq lower bound count}]
Notice that 
$
\delta T^{-2}=(\delta^{\frac{m-1}{2}} Q)^{\frac{2}{m+1}}
\geq (Q^{1-\frac{n-2}{4n-6}})^{\frac{2}{m+1}}
$
tends to infinity for $\delta\in D_Q$ as $Q\rightarrow \infty$.
Let $\varepsilon>0$ be small.

We start by first choosing $a_m$ in $\asymp QT^m$ many ways
in the interval $QT^m \cdot (3/4-\varepsilon,3/4+\varepsilon)$
and then $a_{m-1}$ in $\asymp QT^{m-1}$ many ways
in $QT^{m-1} \cdot (3/4-\varepsilon,3/4+\varepsilon)$.
This allows us to pick the remaining $a_0,\ldots, a_{m-2}$
in $\asymp(\delta T^{-2})^{m-1}$ ways.
Here the point of choosing $a_m,a_{m-1}$
in these shorter intervals is to ensure that
$a_1/a_0$ remains in the interval $[-1,1]$.
We can check this
by formally setting $\varepsilon=0$ which would enforce
$a_j/QT^j=3/4$ for $j\in \{m,m-1\}$ and thus $a_j/QT^j\in[1/2,1]$
for all $1\leq j\leq m$.
The upshot is
$$
\# A_m(\delta,Q)\gtrsim Q^2 T^{2m-1} \left(\frac{\delta}{T^2}\right)^{m-1}=\delta^{m-1}Q^2T
=\delta^{m-1+\frac{1}{m+1}}Q^{2-\frac{1}{m+1}}. 
$$
Applying Lemma \ref{le lower bound} then completes the proof.
\end{proof}
Next, we establish  Lemma \ref{le lower bound}.
\begin{proof}[Proof of Lemma \ref{le lower bound}]
Let $2\leq i \leq m$.
Suppose for the moment we knew that
\begin{equation}\label{eq master rec}
   \frac{a_{j+1}^{i-j}}{a_{j}^{i-(j+1)}}=
\frac{a_{j+2}^{i-(j+1)}}{a_{j+1}^{i-(j+2)}}+O\left(\delta\right)
\end{equation}
for every $j=0,\ldots,i-2$.
Then applying \eqref{eq master rec} exactly $i-1$ many times
with $j=0,1,\ldots,i-3$ implies
$$
 \frac{a_1^i}{a_0^{i-1}}=\frac{a_{i-1}^2}{a_{i-2}}+O\left(\delta\right)=a_{i}+O\left(\delta\right).
$$
This implies 
$
\big\Vert a_0\big(\frac{a_1}{a_0}\big)^i\big\Vert 
\lesssim\delta
$
for $1\leq i\leq m$
while for $m<i\leq n$ the bound follows automatically
from $a_1/a_0=O(T)$.
This would then complete the proof by rescaling the
implied constants down to one, that is replacing $\delta$
with $\delta K$ for some small $K>0$.
By clearing the denominators in
\eqref{eq master rec}, we wish to show 
$$
a_{j+1}^{i-j+i-j-2}=
 a_{j}^{i-j-1} a_{j+2}^{i-j-1}+
 O\left(\delta a_{j+1}^{i-j-2} a_{j}^{i-j-1} \right).
$$
When $i-j=2$ then the choice of $a_j$ directly verifies
the relation. So, we assume $i-j\geq 3$.
Let $k=i-j-1$. Then, it suffices to show
\begin{equation}\label{eq reduction}
a_{j+1}^{2k}=
 a_{j}^{k} a_{j+2}^{k}+
 O\left(\delta a_{j+1}^{k-1} a_{j}^{k} \right).
\end{equation}
Further, we know that 
$
u_j:= a_{j+1}^2 - a_j a_{j+2}
$
satisfies $\vert u_j\vert \lesssim 
\frac{\delta}{T^2} a_{j+2} 
\lesssim \delta Q T^j \asymp \delta a_j$.
Now, 
$$
a_{j+1}^{2k}=
(u_j + a_j a_{j+2})^{k}
=
\sum_{\ell=0}^k 
\binom{k}{\ell} u_j^{\ell}
(a_j a_{j+2})^{k-\ell}.
$$
Since $a_{j+1}^2-a_j a_{j+2}=o(a_ja_{j+2})$,
we infer
$
a_{j+1}^{2k}= 
(a_j a_{j+2})^{k}
 + O(\delta a_j (a_ja_{j+2})^{k-1}).
$
Because $a_{j+2}=O(a_{j+1})$,
\eqref{eq reduction} follows, as required.
\end{proof}

\subsection{Deduction of Theorem \ref{thm main} 
from \eqref{eq lower bound count}}
The above family of examples shows that for $\delta^{n-1}Q^2$ to be a valid upper bound from $N_{\Gamma}$, we need for 
$\delta\in D_Q$ that
$$
\delta^{n-1}Q^2\gtrsim
\delta^{m-1+\frac{1}{m+1}}Q^{2-\frac{1}{m+1}}
$$
for all $2\leq m< n$. In fact, the lower-bound in the case
$m=1$ was already shown in \cite{chen2026sharp}.
Hence, we can investigate when
$ \delta^{n-m-\frac{1}{m+1}}\gtrsim Q^{-\frac{1}{m+1}}$
for every $1\leq m<n$,
or in other words
$ \delta\gtrsim Q^{-\frac{1}{(n-m)(m+1)-1}} $.
It remains to maximize $(n-m)(m+1)-1$ for integers 
$1\leq m<n$. To do so, we consider the critical point at $\frac{n-1}{2}$ of the polynomial 
$P(X)=(n-X)(X+1)-1=-X^2+(n-1)X+n-1$ just as in \cite{hickman2025counting} which then completes the argument.

\bibliography{ref}
\end{document}